\def\dst{\displaystyle}
\def\Real{\mathbb R}
\def\dst{\displaystyle}
\def\bea{\begin{eqnarray}}
\def\eea{\end{eqnarray}}
\def\beann{\begin{eqnarray*}}
\def\eeann{\end{eqnarray*}}
\def\beeq#1{\begin{equation}{#1}\end{equation}}
\def\be{\begin{equation}}
\def\ee{\end{equation}}
\def\ba{\begin{array}}
\def\ea{\end{array}}
\def\bea{\begin{eqnarray}}
\def\eea{\end{eqnarray}}
\def\beann{\begin{eqnarray*}}
\def\eeann{\end{eqnarray*}}
\newtheorem{assumption}{Assumption}
\newtheorem{lemma}{Lemma}
\newtheorem{proposition}{Proposition}
\title{\LARGE \bf About synchronization of homogeneous nonlinear agents\\ over switching networks }
\author{G. Casadei, L. Marconi, and A. Isidori 
  \thanks{ Giacomo Casadei  and Lorenzo Marconi  are with C.A.SY. -- DEI,
 University of Bologna, Bologna, Italy  {\tt\small g.casadei@unibo.it}, {\tt\small lorenzo.marconi@unibo.it}.
 Research supported in part by the European Project SHERPA (G.A. 600958).  }
 \thanks{ A. Isidori is with DIS-``Sapienza" - Universit\`a di Roma, Roma, Italy {\tt \small albisidori@dis.uniroma1.it}}
 }
\begin{document}

 \maketitle
\thispagestyle{empty}
\pagestyle{empty}

\begin{abstract}
 
 In this technical note we address the problem of achieving consensus in a  network of homogeneous nonlinear systems. The communication network is supposed to be switching within a finite set of topologies which may be disconnected for finite time intervals. We prove that if the length of the time intervals in which connected topologies are active satisfy an average dwell-time condition consensus is achieved.  Lyapunov arguments proposed in the field of hybrid nonlinear systems are adopted to prove the result.
 
\end{abstract}

\section{Introduction}
 The problem of achieving consensus among a set of systems exchanging information through a network is extensively studied in the control literature (see \cite{Wieland-diss} for an extensive survey of results in the field). Information network analysis, multi-agent systems, electrical power systems, animal collective behaviour, systems biology are just a few applicative domains where consensus among networked agents plays a role.  
 One of the distinguishing elements of the framework where consensus problems are formulated is how the exchange of information between the agents is modelled. Graph theory is typically used in order to model communication networks (\cite{Fied1}) and conditions under which consensus can be achieved usually require the graph to be connected. A challenging context addressed in literature is when the graph is time-varying. In this field, among the different contributions presented in literature, we recall \cite{LiGuo}, in which the case of linear systems with switching topologies that are jointly connected is addressed, \cite{AlgoSwitch}, dealing with the case of switching networks with the graphs that are uniformly quasi-strongly connected and fulfilling a dwell-time condition, and   \cite{Moreau-b} considering the special case of integrators networked with time varying graphs that are connected in the average.  It is worth  also mentioning the contribution in \cite{Murray} in which switching graphs (with potentially disconnected topologies) and delays are considered with agents modelled as simple integrators. 
 
 In this work we consider the case in which the agents are homogenous nonlinear systems and the communication graph switches within a set of possible topologies not all necessarily connected and with time intervals in which connected topologies are active not necessarily fulfilling a dwell-time. More specifically, we show that consensus can be achieved provided that the time intervals in which a disconnected topology is active have a length that is upper bounded, while the time intervals in which the communication topologies are connected fulfil an {\it average dwell-time} condition (\cite{HespanhaMorse}).  Lyapunov arguments proposed in the field of hybrid control systems are used to prove the main result  (see \cite{SF}, \cite{CaiGT}). This work frames as an addendum of \cite{nostro} in which heterogenous systems networked with fixed connected topologies are considered. In particular, the arguments presented here can be used to show that the control framework of \cite{nostro} succeeds in achieving consensus between networked heterogeneous nonlinear systems 
even in presence of switching topologies of the kind considered in the following.

\section{Basic facts and problem statement}

The paper deals with the problem of reaching a consensus among a set of $N_{\rm a}$ agents, described by homogenous nonlinear systems, exchanging information over  a network whose communication topology switches within a finite set of $N_{\rm t}$ possible configurations. Denoting by ${\mathcal V}=\{v_1,v_2, \ldots, v_{N_{\rm a}}\}$ the set of $N_{\rm a}$ nodes of the network, and by ${\cal T}=\{ {\cal T}_1, \ldots, {\cal T}_{N_{\rm t}}\}$ the set of $N_{\rm t}$ communication topologies,  each topology ${\cal T}_i$, $i=1,\ldots, N_{\rm t}$ is described by  a {\em directed communication graph} given by the following objects: 
\begin{itemize}
\item ${\mathcal E}_i \subset {\mathcal V}\times {\mathcal V}$ is a set of {\em edges} that models the interconnection between nodes in the $i$-th topology, according to the following convention: $(v_k,v_j)$ belongs to ${\mathcal E}_i$ if there is a flow of  information from node $j$ to node $k$. It is assumed that there are no self-loops, i.e. that  $(v_k,v_k)\notin {\mathcal E}_i$.   
\item for each $(v_k,v_j) \in {\mathcal E}_i $ the flow of information from node $j$ to node $k$ in the $i$-th topology is {\em weighted} by the $(k,j)$-th entry $a^i_{kj} \geq 0$ of the so-called {\em adjacency matrix} $A_i \in \Real^{N_{\rm a}\times N_{\rm a}}$.
  \end{itemize}
  The {\em Laplacian matrix} $L_i$ associated to the $i$-th topology can be also immediately computed from the adjacency matrix $A_i$ in the usual way. In particular, $L_i$ is the  $N_{\rm a}\times N_{\rm a}$ real matrix whose $(k,j)$-th entry $\ell^i_{kj}$ is defined as 
 \[
  \ba{rclr}
 \ell^i_{kj}(t) &=& - a^i_{kj}&\mbox{for $k\ne j$}\\[2mm]
 \ell^i_{kj}(t) &=& \dst \sum_{m=1}^{N_{\rm a}} a_{km}^i &\mbox{for $k= j$}.\ea
 \]
By definition, the diagonal entries of $L_i$ are non-negative, the off-diagonal entries are non-positive and, for each row, the sum of all entries on this row is zero (namely $L_i$ is a \textit{Metzler} matrix).  

In the proposed framework, the set of topologies ${\cal T}$ describing the possible communication graphs might be also characterised by topologies that are not necessarily connected\footnote{We recall that a communication topology is said to be {\em connected} if there is a node $v$ from which any other node $v_k \in {\mathcal V}\setminus\{v\}$ can be reached, or equivalently if there is a path from $v$ to all $v_k$. In the previous definition  a {\em path} from node $v_j$ to node $v_k$ in the $i$-th topology is a sequence of  $r$ distinct nodes $\{v_{\ell_1},  \ldots, v_{\ell_r}\}$ with $v_{\ell_1}=v_j$ and $v_{\ell_r}=v_k$ such that $(v_{i+1},v_i)\in{\mathcal E}_i$}. For this reason we split the set $\cal T$ in two disjoint sets ${\cal T}_{\rm c}$ and ${\cal T}_{\rm nc}$, which fulfil ${\cal T}= {\cal T}_{\rm c} \cup {\cal T}_{\rm nc}$ and $ {\cal T}_{\rm c} \cap {\cal T}_{\rm nc} = \emptyset$,  collecting topologies that are, respectively, connected and disconnected. 

We recall this well-known result (\cite{Fied1},\cite{Fied2}) highlighting properties of the laplacians associated to connected and disconnected topologies.

 \begin{proposition} \label{TMconnect} For all $i=1,\ldots, N_{\rm t}$, let $\Lambda_i = \{\lambda_1(L_i), \ldots, \lambda_{N_{\rm a}}(L_i)\}$ be the eigenvalues of $L_i$ ordered with increasing real part. The following holds: 
 \begin{itemize}
 \item if ${\cal T}_i \in {\cal T}_{\rm c}$ then $\lambda_1(L_i)=0$ and $\mbox{Re}\lambda_m(L_i)>0$ for  $m=2,\ldots N_{\rm a}$;
 \item if ${\cal T}_i \in {\cal T}_{\rm nc}$ then there exists a $\rho_i  \in [2,  N_{\rm a}]$ such that $\lambda_m(L_i)=0$ for  $m=1\,\ldots \rho_i$ and $\mbox{Re}\lambda_m(L_i)>0$  for  $m=\rho_i+1\,\ldots N_{\rm a}$.
 \end{itemize}
  \end{proposition}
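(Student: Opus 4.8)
The plan is to derive the two bullets from three classical ingredients: a Gershgorin-type localization of the spectrum, the identification of $\dim\ker L_i$ with a combinatorial quantity attached to the $i$-th topology, and the semisimplicity of the eigenvalue $0$. Throughout I use that, since there are no self-loops, $\ell^i_{kk}=\sum_{m\ne k}a^i_{km}\ge 0$ and this number equals the sum of the moduli of the off-diagonal entries of row $k$.

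Step 1: the spectrum lies in the closed right half-plane and $0$ is the only eigenvalue on the imaginary axis. Since every row of $L_i$ sums to zero, $L_i\mathbf 1=0$ with $\mathbf 1=(1,\dots,1)\tr$, so $0\in\Lambda_i$. By Gershgorin's theorem every eigenvalue lies in one of the discs centred at $\ell^i_{kk}$ with radius $\ell^i_{kk}$; each such disc is contained in $\{z:\mbox{Re}\,z\ge 0\}$ and meets the imaginary axis only at the origin, whence $\mbox{Re}\,\lambda\ge 0$ for all $\lambda\in\Lambda_i$ and $0$ is the only eigenvalue with zero real part. Ordering the eigenvalues with increasing real part, this already gives $\lambda_1(L_i)=0$ in both cases and $\mbox{Re}\,\lambda_m(L_i)>0$ whenever $\lambda_m(L_i)\ne 0$.

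Step 2: the multiplicity of $0$. I first show that its algebraic and geometric multiplicities coincide. As $-L_i$ has non-negative off-diagonal entries and zero row sums, $e^{-L_it}$ is entrywise non-negative and satisfies $e^{-L_it}\mathbf 1=\mathbf 1$, so it is row-stochastic and bounded uniformly in $t\ge 0$; a nontrivial Jordan block of $-L_i$ attached to an eigenvalue on the imaginary axis would force $e^{-L_it}$ to grow polynomially, and by Step 1 the only such eigenvalue is $0$, so $0$ is semisimple. It remains to compute $\dim\ker L_i$. A real vector with $L_ix=0$ satisfies, at each node with at least one in-neighbour, that $x_k$ is a convex combination of the $x_j$ of the nodes feeding node $k$; a maximum-principle argument then shows that the set on which such an $x$ attains its maximum (respectively minimum) is closed under taking in-neighbours, hence contains every node that can reach a node of that set. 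If ${\cal T}_i\in{\cal T}_{\rm c}$ there is a node $v$ that reaches all the others, so $v$ belongs to both the maximizing and the minimizing set, $x$ is constant, and $\ker L_i=\mbox{span}\{\mathbf 1\}$; together with Step 1 and semisimplicity this is the first bullet. If ${\cal T}_i\in{\cal T}_{\rm nc}$ the $i$-th topology has at least two source strongly connected components (components with no incoming edges), and the spanning-forest / Markov-chain argument underlying the result recalled from \cite{Fied1},\cite{Fied2} — assigning to each such component the function whose value at a node is the probability of being absorbed into that component by the random walk on the reversed graph — produces at least two linearly independent elements of $\ker L_i$. Setting $\rho_i:=\dim\ker L_i$, which by semisimplicity equals the algebraic multiplicity of $0$ and therefore lies in $[2,N_{\rm a}]$, Step 1 yields the second bullet.

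The routine parts are Step 1 and the semisimplicity argument. The main obstacle is the exact value of $\dim\ker L_i$ when ${\cal T}_i\in{\cal T}_{\rm nc}$: the maximum principle alone only gives $\dim\ker L_i$ bounded above by the number of source strongly connected components, and the matching lower bound genuinely needs the absorption-probability (equivalently, spanning-forest) construction behind the cited references. One must also be careful with the orientation conventions, since with the normalization of $L_i$ used here it is the components with no incoming edges, not the terminal ones, that index a basis of $\ker L_i$.
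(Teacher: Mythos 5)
Your proof is correct, and it is worth noting that the paper itself offers no proof at all: Proposition~\ref{TMconnect} is recalled as a known fact with a pointer to \cite{Fied1}, \cite{Fied2} (which actually treat symmetric/undirected Laplacians; the directed version used here comes from later literature on digraph Laplacians). Your three ingredients --- Gershgorin localization of the spectrum, semisimplicity of the eigenvalue $0$ via boundedness of the row-stochastic semigroup $e^{-L_i t}$, and the computation of $\dim\ker L_i$ by the maximum principle together with absorption probabilities of the walk toward in-neighbours --- constitute the standard self-contained argument, and you handle the orientation issue correctly: with the paper's convention the kernel is indexed by the strongly connected components that receive no information from outside. Two small remarks. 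First, your maximum-principle and absorption arguments tacitly require the weights on actual edges to be strictly positive, whereas the paper only states $a^i_{kj}\ge 0$; under the literal convention the statement can fail (a connected graph with all-zero weights has $L_i=0$), so you should say explicitly that connectivity is understood with respect to positively weighted edges. Second, for the disconnected bullet you only need the lower bound $\rho_i\ge 2$ on the algebraic multiplicity of $0$, and this is available more cheaply than via absorption probabilities: ordering the nodes so that the (at least two) source components come first makes $L_i$ block lower triangular, with the Laplacian of each strongly connected source component as a diagonal block; each such block contributes a zero row-sum eigenvalue, so the algebraic multiplicity of $0$ is at least the number of source components, and combined with your Step 1 this yields the second bullet without even invoking semisimplicity there (semisimplicity is still needed, as you use it, to get simplicity of $0$ in the connected case).
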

 
Each of the $N_{\rm a}$ agents is described by the nonlinear dynamics
   \beeq{ \label{exo1}
   \ba{rcl}
  \dot w_k &=& s(w_k)  + u_k \qquad w_k \in \Real^d\\
  y_k &=& c(w_k)
  \ea
  }
 in which, for each $k=1,\ldots, N_{\rm a}$, $u_k \in \Real$ is the control input, $y_k \in \Real$ is the available measurement, and $s(\cdot)$ and $c(\cdot)$ have the form better specified in Section \ref{SecAss}. Note that we deal with homogenous nonlinear agents, namely $s(\cdot)$ and $c(\cdot)$ do not depend on $k$. 

We look for a decentralised control structure in which the control law of a single agent is taken as
 \beeq{\label{nunu}
 u_k = K\nu_k^i\,, \qquad  
 \nu_k^i= \sum_{j=1}^{N_{\rm a}} a_{kj}^i\, (c(w_j)-c(w_k))
 }
 with $K$ to be designed in such a way that output consensus is reached among the agents. Namely,  for each  initial condition $w_k(0)\in \Real^d$, there is a function $y^\ast : \Real \to \Real$  such that
\[
\lim_{t\to \infty}|y_k(t) - y^\ast(t)|=0\,,
\]
uniformly in the initial conditions, for all $k=1,\ldots, N_{\rm a}$.
It is worth noting that, in the proposed framework, no leader is considered, and only the neighbour's information is available according to the underlying communication topology. Furthermore, local output of single agents rather than a {\em full state} information is assumed to be spread over the network.

The different communication topologies alternates in time by forming an ordered sequence  $\{{\cal T}_i\}_{i=1}^\infty$, with each ${\cal T}_i$ taken in the set $\cal T$.  We denote by $\Delta T_i \geq 0$, $i=1,\ldots, \infty$ the length of the time interval in which the $i$-th communication topology is active. Note that time intervals of zero length are allowed in the proposed framework. By this fact, without loss of generality, we can assume that the topologies alternates in time according to the rule that ${\cal T}_i \in {\cal T}_{\rm c}$ if $i$ is odd and ${\cal T}_i \in {\cal T}_{\rm nc}$ is $i$ is even. As a matter of fact, if two connected (disconnected)  communication topologies occur in a row  we can always "separate" them with a disconnected (connected) topology of zero length without practically changing the networked system dynamics. Note also that we do not assume that connected communication topologies persist for a guaranteed dwell time, namely connected topologies can last for arbitrarily small (indeed also of length zero) time interval.  The kind of result we will prove   (see next Proposition \ref{prop1}) is that consensus is reached if the intervals of time in which connected topologies govern the communication between the agents have a sufficiently long (in the average) duration.  

\section{Main assumptions}\label{SecAss}
The main result  is proved under a certain number of assumptions that are here presented. First, we assume that the nonlinear agent dynamics (\ref{exo1}) are of the form 
      \beeq{\label{exo2}
 s(w) = Sw + B\phi(w)\,, \qquad c(w)=Cw
 }
 with $(S,B,C)$ a triplet of matrices in {\em prime} form, that is $S$ is a shift matrix (all $1$'s on the upper diagonal and all $0$'s elsewhere), $B^T=\left(0 \cdots 0 \;\; 1\right)$ and $C=\left(1 \;\; 0 \cdots 0 \right)$, and the function $\phi(\cdot)$ that is  globally Lipschitz, namely there exist a positive constant $\bar \phi$ such that $|\phi(w)| \leq \bar \phi$ for all $w \in \Real^d$. 
 
 Furthermore, as in \cite{nostro}, we assume that agents (\ref{exo1}) have a robust compact attractor $W \subset \Real^d$, where robustness is characterised in terms of Input-to-State Stability.
\begin{assumption}\label{ass1}
   There exists a compact set  $W\subset \Real^d$ invariant  for (\ref{exo1}) with $u=0$ such that the system
\[
  \dot w = S w + B \phi(w) + u
\]
 is input-to-state stable with respect to $u$ relative to $W$, namely there exist
a class-${\cal K}{\cal L}$ function $\beta(\cdot,\cdot)$ and a class-$\cal K$ function $\gamma(\cdot)$ such that\footnote{Here and in the following we denote by $\|w\|_{W}=\min_{x \in W}\|w-x\|$  the
distance of $w$ from $W$. Furthermore, $w(t,\bar w)$ denotes the solution of (\ref{exo1}) at time $t$ with initial condition $\bar w$ at time $t=0$.
}
 \[
 \|w(t,\bar w)\|_{W} \leq \max\{ \beta(\|\bar w\|_W,t),\; \gamma(\sup_{\tau \in [0,t)}\|u(\tau)\|)\}\,.
 \]
 \end{assumption}
  
  We now fix some restrictions on the communication topologies that might occur in the network. The first assumption asks that the real part of the nontrivial eigenvalues of the  laplacians of the possible topologies are bounded from below by a known constant $\mu$. 
 \begin{assumption} \label{ass2}
There exist a $\mu >0$ such that, for all $i=1,\ldots,N_{\rm t}$ and for all $m=1,\ldots, N_{\rm a}$ such that $\lambda_m(L_i) \neq 0$, the following holds:
\[
\mbox{\rm Re}\lambda_m(L_i) \geq \mu\,.
\]
 \end{assumption} 
 
 It is worth noting that such assumption is automatically fulfilled if the elements $a_{kj}^i$ of the adjacency matrices associated 
 to the  topologies in $\cal T$ range in known intervals (since $\cal T$ is a finite set). In particular, it is automatically fulfilled in case of binary adjacency matrices. 
 We formulate now an assumption about the length of the time intervals in which disconnected topologies are active. The assumption simply asks 
 that such a length is upper bounded by a known constant.  
 
 \begin{assumption} \label{ass3}
There exists a $T_0 >0$ such that for all  $i=1,\ldots,N_{\rm t}$ such that ${\cal T}_i \in {\cal T}_{\rm nc}$ the following holds
\[
\Delta T_i \leq T_0\,.
\]
 \end{assumption} 
  
  The previous conditions are clearly not enough to prove any  asymptotic consensus between the agents since no properties on the time intervals in which the topologies are connected are formulated. In this respect the additional condition under which the main result will be proved asks that the time intervals in which the network is connected last, in the average, sufficiently long. More  precisely, we asks that there exist positive  $\tau\in \Real_{\geq 0}$ and $n_0 \in \mathbb N$ such that, for all possible $n, i_0 \in \mathbb N$ with $i_0$ odd, we have
\beeq{\label{ADWT}
\sum_{i=i_0, \,i=i +2}^{i_0 + 2n} \Delta T_i \geq \tau \, (n-n_0) 
}

The previous condition can be regarded as a average dwell-time condition (see \cite{HespanhaMorse}), with the time $\tau$, in particular, that can
be seen as an average length of the intervals in which the network is connected, and $n_0$ representing the number of "connected" intervals of zero
duration that can occur in a row.  The result formulated in the next proposition, in fact, claims that consensus is achieved if (\ref{ADWT}) is fulfilled for some $n_0$ and $\tau$ with the latter sufficiently large. 

\section{Main Result}\label{SecOS}
 Following \cite{nostro} we choose the vector $K$ as
 \beeq{\label{K}
 K=D_gK_0
 }
  where $D_g={\rm diag}(g,g^2, \ldots, g^d)$ with $g$ is a ``gain" parameter, and
 $K_0$ is chosen as 
 \beeq{\label{K0}K_0=P C^T}
 with $P$ solution of the algebraic Riccati equation
 \beeq{\label{Riccati}
   S P + PS^T - 2 \mu P C^T C P + a I=0
  }
  with $a>0$, $S$ and $C$ as in (\ref{exo2}) and $\mu$ as in Assumption \ref{ass2}.
  
\begin{proposition}\label{prop1}
 Consider the networked control system (\ref{exo1}) controlled by (\ref{nunu}) with $K$ as in (\ref{K}) under the Assumptions \ref{ass1}-\ref{ass3} an with the length of the time interval of connected topology fulfilling the average dwell-time condition (\ref{ADWT}) for some $n_0\geq 1$ and $\tau$.  Then there exist a $\tau^\star$ and $g^\star$ such that for all $\tau \geq \tau^\star$ and $g \geq g^\star$ the compact  invariant set
\beeq{\label{bigw}\ba{l}
\ba{rcl}
{\bf W} &=& \{(w_1,w_2,\ldots,w_{N_{\rm a}})\in W\times W \times \cdots \times W\\
 && \hspace*{2.5cm} \; : \; w_1=w_2=\cdots= w_{N_{\rm a}}\}\ea
\ea
}
 is globally asymptotically stable for the closed-loop network system.  $\triangleleft$
\end{proposition}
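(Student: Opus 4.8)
The plan is to pass to disagreement coordinates together with the usual high-gain rescaling, to use the Riccati equation (\ref{Riccati}) to produce Lyapunov certificates valid on every interval in which a connected topology is active, to bound the (finite-duration) growth on intervals of disconnected topology, to assemble the two estimates through an average-dwell-time argument in the spirit of switched/hybrid systems, and finally to invoke the ISS property of Assumption \ref{ass1} to steer the synchronised trajectory onto $W$.

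\emph{Disagreement coordinates.} Stack the states, $w=(w_1,\dots,w_{N_{\rm a}})$; since $\nu^i_k=-\sum_{j}\ell^i_{kj}Cw_j$, the closed loop is $\dot w=(I\otimes S)w+(I\otimes B)\Phi(w)-(L_i\otimes KC)w$ with $\Phi$ the stacking of the $\phi(w_k)$. Pick $M\in\Real^{(N_{\rm a}-1)\times N_{\rm a}}$ of full row rank with $\ker M=\spn\{\mathbf 1\}$ (rows the consecutive differences $e_k^T-e_{k+1}^T$), so that $z:=(M\otimes I_d)w$ vanishes exactly on the subspace defining ${\bf W}$ and, since $L_i\mathbf 1=0$, there is $\bar L_i$ with $ML_i=\bar L_iM$ whose spectrum is that of $L_i$ with one zero removed. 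One finds $\dot z=[(I\otimes S)-(\bar L_i\otimes KC)]z+(M\otimes B)\Phi(w)$, the forcing lying in the range of $I\otimes B$ and, $\phi$ being globally Lipschitz and bounded, dominated both by $\ell_\phi\|z\|$ and by a constant. The substitution $\zeta:=(I\otimes D_g^{-1})z$, together with $D_g^{-1}SD_g=gS$, $CD_g=gC$, $D_g^{-1}B=g^{-d}B$ and $K=D_gK_0$, yields
\begin{equation}\label{pfzeta}
\dot\zeta=g\,\mathcal A_i\,\zeta+\delta_i,\qquad \mathcal A_i:=(I\otimes S)-(\bar L_i\otimes K_0C),
\end{equation}
with $\|\delta_i\|\le\ell_\phi\|\zeta\|$ and, simultaneously, $\|\delta_i\|\le c_\phi g^{-d}$, the constants $\ell_\phi,c_\phi$ being independent of $g$; the target set is $\{\zeta=0,\ w_k\in W\ \forall k\}$.

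\emph{Lyapunov certificates on connected intervals.} Pre- and post-multiplying (\ref{Riccati}) by $P^{-1}$ shows that $Q:=P^{-1}$ solves $QS+S^TQ=2\mu C^TC-aQ^2$, hence for every $\lambda\in\Compl$ with ${\rm Re}\,\lambda\ge\mu$ one has $Q(S-\lambda K_0C)+(S-\lambda K_0C)^{*}Q=-aQ^2-2({\rm Re}\,\lambda-\mu)C^TC\prec0$: $Q$ is a common Lyapunov matrix for all such modal matrices $S-\lambda K_0C$. When ${\cal T}_i\in{\cal T}_{\rm c}$, Proposition \ref{TMconnect} and Assumption \ref{ass2} put the whole spectrum of $\bar L_i$ in $\{{\rm Re}\ge\mu\}$, so diagonalising $\bar L_i$ — after a diagonal similarity that shrinks any Jordan super-diagonal entries — presents $\mathcal A_i$ as a small perturbation of a block-diagonal collection of Hurwitz blocks $S-\lambda_m K_0C$, hence Hurwitz, and produces $P_i=P_i^T\succ0$ and $c_i>0$ with $\mathcal A_i^TP_i+P_i\mathcal A_i\preceq-c_iP_i$. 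Since ${\cal T}_{\rm c}$ is finite, set $c:=\min_ic_i>0$ and $0<\underline p\,I\preceq P_i\preceq\bar p\,I$ for all connected $i$. Then $V_i=\zeta^TP_i\zeta$ satisfies, along (\ref{pfzeta}), $\dot V_i\le-gcV_i+2\zeta^TP_i\delta_i$; for $g$ large enough to dominate the Lipschitz part, $\dot V_i\le-\tfrac{gc}{2}V_i+O(g^{-2d})$, so over a connected interval of length $\Delta T$ the disagreement $\|\zeta\|$ is multiplied by at most $\sqrt{\bar p/\underline p}\;e^{-gc\Delta T/4}$, up to an $O(g^{-d-1})$ residual.

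\emph{Disconnected intervals, assembly and conclusion.} When ${\cal T}_i\in{\cal T}_{\rm nc}$, $\bar L_i$ has a semisimple zero eigenvalue of multiplicity $\rho_i-1$ and the rest of its spectrum in $\{{\rm Re}\ge\mu\}$, so in the matching basis $\mathcal A_i={\rm diag}(I\otimes S,\ \mathcal A_i^+)$ with $\mathcal A_i^+$ Hurwitz by the previous step; since $S$ is nilpotent, $\|e^{g\mathcal A_i t}\|$ is bounded, for $t\in[0,T_0]$, by a factor $K_g$ that is at worst polynomial in $gT_0$. Using Assumption \ref{ass3} and the constant bound on $\delta_i$, a disconnected interval multiplies $\|\zeta\|$ by at most $K_g$, up to an $O(K_gg^{-d})$ residual — and no exponential-in-$g$ blow-up arises, exactly because the perturbation stays bounded. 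Composing the two estimates over a window of $2n$ intervals beginning at an odd index and using (\ref{ADWT}) (total connected time $\ge\tau(n-n_0)$), the disagreement is multiplied by $\big(\sqrt{\bar p/\underline p}\,K_g\,e^{-gc\tau/4}\big)^{n}$ up to a bounded residual; since $\ln K_g=O(\ln g)$ is dominated by $gc\tau/4$, there exist $\tau^{\star}$ and $g^{\star}$ such that this factor is $<1$ whenever $\tau\ge\tau^{\star}$ and $g\ge g^{\star}$, whence $\zeta\to0$. One packages this as global asymptotic stability of $\{\zeta=0\}$ by means of a dwell-time-modulated quadratic Lyapunov function in the hybrid-systems style of \cite{HespanhaMorse,SF,CaiGT}. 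Finally, once synchronization is asymptotically attained, each input $u_k=-KC\sum_j\ell^i_{kj}(w_j-w_k)$ is $O(\|z\|)$ and vanishes, so Assumption \ref{ass1} forces $\|w_k(t)\|_W\to0$; together with $z\to0$ this gives attractivity of ${\bf W}$ in (\ref{bigw}), and the estimates above together with the class-${\cal K}{\cal L}$ bound of Assumption \ref{ass1} give stability, hence global asymptotic stability.

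\emph{Main obstacle.} The delicate part is the assembly: the growth accrued on intervals of disconnected topology (controlled through $T_0$ by Assumption \ref{ass3}) must be traded against the decay banked on intervals of connected topology (bounded from below through $\tau$ by (\ref{ADWT})), uniformly enough in the high gain to pull out a single pair $(\tau^{\star},g^{\star})$, while the perturbation $\delta_i$ is, on disconnected intervals, not subject to any stabilising action — it is the boundedness of $\phi$ that prevents an exponential blow-up there, and the Lipschitz bound that upgrades practical consensus to asymptotic consensus. A secondary technical nuisance is the possible non-diagonalisability of the directed reduced Laplacians $\bar L_i$, handled by the diagonal-similarity scaling in the second step.
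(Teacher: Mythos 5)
Your overall route is the same as the paper's: pass to disagreement coordinates in which the Laplacian is reduced modulo $\spn\{\mathbf 1\}$, rescale by $D_g^{-1}$, use the Riccati equation (\ref{Riccati}) to certify exponential decay on connected intervals (your computation with $Q=P^{-1}$ is precisely the content of the lemma the paper imports from \cite{nostro}, and your diagonal Jordan-scaling plays the role of the weight $D(\ell)$ in the paper's $V(\zeta)=\zeta^T(D(\ell)\otimes P^{-1})\zeta$), bound the excursion on disconnected intervals via Assumption \ref{ass3}, account for the coordinate jumps at switches, compose through the average dwell-time condition with a hybrid/clock Lyapunov construction, and finish with Assumption \ref{ass1}. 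However, there is a genuine gap in your treatment of the disconnected intervals and, consequently, in the assembly. By invoking the \emph{constant} bound $\|\delta_i\|\le c_\phi g^{-d}$ there, your per-window recursion has the form $\|\zeta\|\mapsto \rho\,\|\zeta\|+R$ with $\rho<1$ but $R>0$ independent of $\zeta$; this yields only convergence to a ball of radius $O(R/(1-\rho))$, i.e.\ \emph{practical} consensus, not asymptotic stability of $\{\zeta=0\}$. The sentence claiming that the Lipschitz bound ``upgrades practical consensus to asymptotic consensus'' is exactly the missing step, and it does not go through as set up: if on a disconnected interval you replace the constant bound by $\|\delta_i\|\le\ell_\phi\|\zeta\|$ while keeping your polynomial transition-matrix estimate $\|e^{g\mathcal A_i t}\|\le K_g$ with $K_g=O((gT_0)^{d-1})$, Gronwall gives a multiplicative factor $K_g e^{K_g\ell_\phi T_0}$, whose logarithm grows like $g^{d-1}$; this is \emph{not} dominated by the decay $e^{-gc\tau/4}$ banked on connected intervals for a gain-independent $\tau^\star$ once $d\ge 3$. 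Your ``no exponential-in-$g$ blow-up'' observation is thus precisely what breaks the argument: it forces you into the bounded-perturbation estimate and hence into the residual.

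The repair — and what the paper actually does — is to give up the polynomial refinement and use the Lipschitz bound \emph{throughout}, accepting the crude estimate $\dot V\le a_{\rm nc}V$ on disconnected intervals with $a_{\rm nc}$ growing only \emph{linearly} in $g$ (from $\|g\mathcal A_i\|$), so that the disconnected-interval estimate is homogeneous in $V$ and leaves no additive residual; the resulting growth factor $e^{a_{\rm nc}T_0}$, together with the switch-jump factor $a_{\rm j}$ (your $\sqrt{\bar p/\underline p}$), is then absorbed by the connected-interval decay through the average dwell-time condition. The paper formalises the last composition by attaching a clock variable $\varsigma$ with $\dot\varsigma\in[0,1/\tau]$, $\varsigma^+=\varsigma-1$, and showing that $W(\varsigma,\zeta)=e^{L\varsigma}V(\zeta)$ with $L\in(c_{\rm j},\tau a_{\rm c}/2)$ strictly decreases along both flows and jumps; some such device is needed because (\ref{ADWT}) permits $n_0$ consecutive connected intervals of zero length, so a naive ``per-window contraction'' count of the type you sketch does not directly yield Lyapunov \emph{stability} (as opposed to mere attractivity) of the consensus set. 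With those two modifications your argument matches the paper's.
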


\begin{proof}
Consider the closed-loop networked system obtained by controlling the agents (\ref{exo1}), with $s(\cdot)$ and $c(\cdot)$, as in (\ref{exo2}),  with decentralised controllers (\ref{nunu}) when the communication network is governed by the generic $i$-th topology ${\cal T}_i \in \cal T$.  
By bearing in mind the definition of Laplacian, we first observe that the $\nu_k^i$ in (\ref{nunu}) can be re-written as
\[
 \nu_k^i = -\sum_{j=1}^{N_{\rm a}}\, \ell_{kj}^i C w_j\qquad k=1,\ldots, N_{\rm a}\,.
 \]

By defining $\mathbf{w}=\mbox{col}(w_1,\ldots,w_{N_{\rm a}})$ and by bearing in mind the choice of $K$, the networked system can be compactly rewritten as\footnote{Here and in what follows, $A \otimes B$ denotes the Kronecker product of the two matrices $A$ and $B$.} 
 \beeq{\label{bw}
\dot {\bf w} = \left[ (I_{N_a} \otimes S) - (L_i \otimes D_g K_0 C) \right ] {\bf  w}+ (I_{N_a} \otimes B) \Phi({\bf w})
}
where $\Phi({\bf w})=\mbox{col}(\phi(w_1), \ldots, \phi(w_{N_{\rm a}}))$. 

 Consider now the matrix  $T_i\in \Real^{N_{\rm a} \times N_{\rm a}}$  defined as
  \[
   T_i= \left( \ba{c} 1 \quad 0_{1 \times (N_{\rm a}-1)}\\
   \Upsilon_i  \qquad
   \ea \right ) \quad \Upsilon_i := \left( \ba{cc}1_{N_{\rm a}-1} & J_i \ea \right )
  \]
  where ${J_i} \in \mathbb{R}^{(N_{\rm a}-1) \times (N-_{\rm a}1)}$ is a non singular matrix that transforms the matrix \[L_{i \, [2:N_{\rm a}, 2:N_{\rm a}]} - {1}_{N_{\rm a}-1} \,L_{i \, [1,2:N_{\rm a}]}\] into the Jordan form. Note that $T_i$ is non singular with the inverse given by
  \[
   T_i^{-1}= \left( \ba{c} 1 \quad 0_{1 \times (N_{\rm a}-1)}\\
   \Upsilon_i^{-1}  \qquad
   \ea \right ) \quad \Upsilon_i^{-1} := \left( \ba{cc}- J_i^{-1}1_{N_{\rm a}-1} & J_i^{-1} \ea \right )\,.
  \]
  
  Elementary computations show that   
  \beeq{\label{LapTilde}
   \tilde{L}_i = T_i^{-1} L_i T_i = \left( \ba{cc} 0 & L_{i, {12}}\\
    0_{(N_{\rm a}-1) \times 1} & L_{i, {22}}
    \ea \right )
 }
 where $L_{i, {12}}=L_{i \, [1,2:N_{\rm a}]} J_i$ and  $L_{i, {22}} = J_i^{-1} ( L_{i \, [2:N_{\rm a}, 2:N_{\rm a}]} - {1}_{N_{\rm a}-1} \,L_{i \, [1,2:N_{\rm a}]}) J_i$. The latter, by the specific choice of $J_i$, takes the block diagonal form
 \[
  L_{i, {22}} = \mbox{blkdiag}(L_{i,\rm nc}, \, L_{i, \rm c})
 \]
with $L_{i,\rm nc}$ and $ L_{i, \rm c}$ having all the eigenvalues, respectively, in the origin and with positive real part. In particular, having in mind  Proposition \ref{TMconnect} and the fact that 
$\mbox{eig}(L_i) =\mbox{eig}(\tilde L_i) =  \{ 0 \} \cup \mbox{eig}(L_{i,\rm nc}) \cup \mbox{ eig}(L_{i,\rm c})$, it turns out that $L_{i,\rm nc}$ is absent  if ${\cal T}_i \in {\cal T}_{\rm c}$, while the dimension of $L_{i,\rm nc}$ is $\rho_i - 1 \geq 1$ if  ${\cal T}_i \in {\cal T}_{\rm nc}$.
  We consider now the change of variables
  \[
   {\bf w} \quad \mapsto \quad \left ( \ba{c} z_1\\ {\bf z} \ea \right ) = (T_{i}^{-1} \otimes I_d ) {\bf w} \,,
  \]
  with $z_1 \in \Real^d$ and ${\bf z} \in \Real^{(N_{\rm a} -1)d}$. Note that, by definition of $T_i$, $z_1 = w_1$, ${\bf z} = (\Upsilon_i^{-1} \otimes I_d) {\bf w}$ and 
  ${\bf w} = (\Upsilon_i \otimes I_d) {\bf z}$. By using (\ref{LapTilde}), an easy calculation shows that system (\ref{bw}) in the new coordinates reads as
  \[
  \ba{rcl}
   \dot z_1 &=& S z_1 + B \phi(z_1) - (L_{i, {12}} \otimes D_gK_0 C) {\bf z}\\[2mm]
   \dot {\mathbf{z}} &=& \left[ (I_{N_{\rm a}-1} \otimes S) - (L_{i,{22}} \otimes D_g K_0 C) \right ] { \bf z}
   +\Delta \Phi(z_1, {\bf z})
   \ea
  \]
  where
  \[
   \Delta \Phi (z_1, {\bf z}) = (I_{N-1} \otimes B) \left( \ba{ccc} \phi(z_1+z_2) - \phi(z_1)\\
    \phi(z_1+z_3) - \phi(z_1)\\
    \cdots\\
    \phi(z_1+z_N) - \phi(z_1)
   \ea \right )
  \]
  having partitioned ${\bf z} =\mbox{col}(z_2, \ldots, z_{N_{\rm a}})$, with $z_i \in \Real^d$, $i=2,\ldots, N_{\rm a}$. Note that, by the fact that $\phi(\cdot)$ is globally Lipschitz, we have that  $ \|\Delta \Phi (z_1, {\bf z})\| \leq \bar \Phi \|{\bf z}\|$ for all $z_1 \in \Real^d$ and ${\bf z} \in \Real^{(N_{\rm a} - 1)d}$, for some positive constant $\bar \Phi$.
  
 As proposed in \cite{nostro}, we now rescale the variable $\bf z$ in the following way
 \beeq{\nonumber \zeta=(I_{N_{\rm a}-1} \otimes D_g^{-1}){\bf z}} 
by obtaining 

 \[
\begin{array}{rcl}
 \dot z_1 &=& S z_1 + B \phi(z_1) - (L_{i, {12}} \otimes D_gK_0 C) (I_{N_{\rm a}-1} \otimes D_g) {\zeta}\\
\dot{\zeta}
&=& g {H}_i \zeta + \dst \frac{1}{g^d} {\Delta \Phi}(z_1,(I_{N_{\rm a}-1} \otimes D_g)\zeta)
\end{array}
\]
  where $H_{i}=[(I_{N-c} \otimes S)-(L_{i,22} \otimes K_0C)]$. In particular, by taking advantage from the block diagonal structure of $L_{i,22}$ and by bearing in mind Proposition \ref{TMconnect}, the $\zeta$ subsystem can be rewritten as two decoupled systems of the form
  
   \beeq{ \label{gamma}
 \begin{array}{rcl}
 \dot{\zeta}_{\rm nc}&=&g\, H_{i, {\rm nc}} \, \zeta_{\rm nc} + \dst \frac{1}{g^d} {\Delta \Phi}_{\rm nc}(z_1,(I_{\rho_i-1} \otimes D_g)\, \zeta_{\rm nc}) \\
 \dot{\zeta}_{ \rm c}&=&g H_{i, \rm c} \zeta_{\rm c} + \dst \frac{1}{g^d} {\Delta \Phi}_{\rm c}(z_1,(I_{N_{\rm a}-\rho_i} \otimes D_g)\, \zeta_{\rm c})  
  \end{array} 
  }
 where  ${\zeta}_{\rm nc} \in \Real^{(\rho_i-1)d}$,  ${\zeta}_{ \rm c} \in \Real^{(N_{\rm a} - \rho_i)d}$, 
 \[
 \ba{rcl}
 H_{i, {\rm nc}} &=& (I_{\rho_i-1} \otimes S)-(L_{i,\rm nc} \otimes K_0C)\\[2mm]
 H_{i, {\rm c}} &=& (I_{N_{\rm a}-\rho_i} \otimes S)-(L_{i,\rm c} \otimes K_0C)
 \ea
 \]
 and ${\Delta \Phi}_{\rm nc} \in \Real^{\rho_i-1}$, ${\Delta \Phi}_{\rm c} \in \Real^{N_{\rm a}-\rho_i}$ are such that ${\Delta \Phi}(\cdot) = \mbox{col}({\Delta \Phi}_{\rm nc}(\cdot), {\Delta \Phi}_{\rm c}(\cdot))$. We observe that, according to Proposition \ref{TMconnect}, $\rho_i=1$ if the $i$-th topology is connected, while 
$\rho_i\in[2,N_{\rm a}]$ otherwise. Namely,  the ${\zeta}_{\rm nc}$ dynamics in (\ref{gamma}) are absent if ${\cal T}_i \in {\cal T}_{\rm c}$.
 
 In the remaining part of the proof we show that the origin of system (\ref{gamma}) is globally asymptotically stable if the communication topologies $\{{\cal T}_i\}_{i=1}^\infty$ governing the networked system satisfy Assumption 3 and  the average dwell-time condition (\ref{ADWT}) with a $\tau$ sufficiently large.  
 
 To this end we consider the candidate Lyapunov function  
 \beeq{ \label{lyap} 
  V(\zeta)=\zeta^T (D(\ell) \otimes {P}^{-1}) \zeta
 }
  where $P$ is the solution of (\ref{Riccati}) and $D(\ell) = \mbox{diag}(1, \ell, \ell^2, \ldots, \ell^{N_{\rm a} -2})$ with $\ell$ a positive design parameter yet to be fixed. Note that there exist positive constants $\underline \lambda \leq \bar \lambda$, both dependent on $\ell$, such that $\underline \lambda \zeta^T \zeta \leq V \leq \bar \lambda \zeta^T \zeta$ .
  We study the behaviour of $V$ during the time intervals in which the communication topology is connected, when it is disconnected, and when there is transition between two different topologies.
  
 We start by considering the case in which ${\cal T}_i \in {\cal T}_{\rm c}$ ($i$ odd), by first observing that in this case the $\zeta_{\rm nc}$ subsystem in (\ref{gamma}) is absent, namely $H_i = H_{i, \rm c}$.  The  derivative of $V$ along the solutions of (\ref{gamma}) can be thus bounded as 
   \[
   \ba{rcll}
    \dot V &=& 2 \zeta^T (D(\ell) \otimes P^{-1}) [g H_{i} \zeta+ \\&&\qquad \dst {1 \over g^d} \Delta \Phi(z_1,(I_{N_a-1}\otimes D_g)\zeta)]  \\[4mm]
     &\leq &2 \zeta^T (D(\ell) \otimes P^{-1}) g H_{i,\rm c} \zeta  +\\
    && \qquad  \dst {2\over g^d} \bar \Phi  \|D(\ell) \otimes P^{-1}\| \|(I_{N_{\rm a}-1} \otimes D_g)\| \zeta^T \zeta \\[2mm]
    &\leq&  2 \zeta^T (D(\ell) \otimes P^{-1}) g H_{i} \zeta + a_\phi  \zeta^T \zeta \\[2mm]
   \ea
  \]
  where $a_{\phi}$ is a positive constant not dependent on $g$ (provided that the latter is taken $g \geq 1$). To further elaborate $\dot V$ we recall this 
  crucial result (see \cite{nostro}).
   \begin{lemma}
   Let Assumption 2 hold. Then there exist positive constants $a_{\rm c}'$ and $\ell^\star$ such that for all $\ell \geq \ell^\star$ and for all $i$
   \[
      2 \zeta_{\rm c}^T (D(\ell) \otimes P^{-1}) H_{i, \rm c} \, \zeta_{\rm c} \leq - a_{\rm c}' \zeta_{\rm c}^T \zeta_{\rm c}\,.
   \]
  \end{lemma}
  
  Using the previous lemma and taking $g^\star =(a_\phi + a_{\rm c} \bar \lambda)/ a_{\rm c}' $ with $a_{\rm c}$ an arbitrary positive constant ($g^\star \geq 1$ without loss of generality), it is immediately seen that for all $\ell \geq \ell^\star$ and $g \geq g^\star$ we have 
  
    \beeq{\label{dotVconn}
   \ba{l}
    \dot V \leq  -(g a_{\rm c}' - a_\phi) \zeta^T \zeta  \leq -\dst {g a_{\rm c}' - a_\phi \over \bar \lambda} V \leq  
     - \dst {a_{\rm c}} V\,.
   \ea
  }

 We consider now time intervals in which ${\cal T}_i \in {\cal T}_{\rm nc}$ ($i$ even). In this case we have
     \beeq{\label{dotVdisc}
   \ba{l}
    \dot V = 2 \zeta^T (D(\ell) \otimes P^{-1}) [g H_i \zeta + \dst {1 \over g^d} \Delta \Phi(z_1,(I_{N_a-1}\otimes D_g)\zeta)]\\[2mm]
    \leq g \,a_{\rm nc}' \, \zeta^T \zeta+ \dst {2\over g^d} \bar \Phi  \|D(\ell) \otimes P^{-1}\| \|(I_{N_{\rm a}-1} \otimes D_g)\| \zeta^T \zeta \\[2mm]
    \leq (g \,a_{\rm nc}'  + a_\phi)  \zeta^T \zeta = \dst a_{\rm nc} V\\[2mm]
   \ea
  }
  where $a_{\rm nc}' := 2 \| (D(\ell) \otimes P^{-1})  H_{i}\|$, $a_{\phi}$ is the positive constant introduced above, and $a_{\rm nc} = (g a_{\rm c}' + a_{\phi})/\underline \lambda$. 
 
 We now estimate the jump in the value of $V(\zeta)$ when a change in the topology occurs, namely when ${\cal T}_{i+1}$ replaces ${\cal T}_i$. Denoting by $\zeta^+$, ${\bf z}^+$ and ${\bf w}^+$ the "next value" of the state variables $\zeta$, $\bf z$ and $\bf w$ when a jump in the topology occurs, and by bearing in mind the definition of $\zeta$ and $\bf z$, we have ${\bf w}^+=\bf w$ and 
  \beeq{ \label{zeta+}
\ba{rcl}
{\zeta}^+&=&(I_{N_{\rm a}-1} \otimes D_g^{-1}) {\bf z}^+\\[1mm]
 &=& (I_{N_{\rm a}-1} \otimes D_g^{-1}) (\Upsilon_{i+1}^{-1} \otimes I_d){\bf w}^+\\[1mm]
 &=& (I_{N_{\rm a}-1} \otimes D_g^{-1}) (\Upsilon_{i+1}^{-1} \otimes I_d){\bf w}\\[1mm]
 &=& (I_{N_{\rm a}-1} \otimes D_g^{-1}) (\Upsilon_{i+1}^{-1} \otimes I_d)(\Upsilon_{i} \otimes I_d){\bf z}\\[1mm]
 &=& (I_{N_{\rm a}-1} \otimes D_g^{-1}) (\Upsilon_{i+1}^{-1} \otimes I_d)(\Upsilon_{i} \otimes I_d)(I_{N_{\rm a}-1} \otimes D_g) \zeta\\[1mm]
 &=& (\Upsilon_{i+1}^{-1}\Upsilon_i \otimes I_d) \zeta\,.
\ea
}
Hence, by letting
\[
\bar \upsilon= \max_{i,j \in [1,\ldots N_{\rm t}]} \|(\Upsilon_{j}^{-1}\Upsilon_i \otimes I_d) \|\,
\]
we can easily bound the jump of the Lyapunov function when  the topology switches as 
 \beeq{\label{Vjump}
 \ba{rcl}
  V^+&=&\zeta^{+T} (D(\ell) \otimes {P}^{-1}) \zeta^+ \leq {\bar \lambda} \|\zeta^{+}\|^2\\[1mm]
   &=& {\bar \lambda} \|(\Upsilon_{i+1}^{-1}\Upsilon_i \otimes I_d) \zeta\|^2 \leq  {\bar \lambda} \|(\Upsilon_{i+1}^{-1}\Upsilon_i \otimes I_d)\|^2 \|\zeta\|^2\\[1mm]
   &\leq& \dst {\bar \lambda} \bar \upsilon^2 \|\zeta\|^2 \leq   {\bar \lambda \over \underline \lambda} \, \bar \upsilon^2\,  V := a_{\rm j} V\,.
 \ea
 }

 We will continue the analysis by considering the closed-loop networked  system as an hybrid system flowing during the time intervals
in which the communication topology is connected ($i$ odd), and ''instantaneously'' jumping in the intervals in which the topology is disconnected. To this end, let $i$ be odd and let  $t_i$, $t_{i+1}$ be, respectively, the times at which the topology switches from ${\cal T}_i \in {\cal T}_{\rm c}$ to ${\cal T}_{i+1}\in {\cal T}_{\rm nc}$, and from ${\cal T}_{i+1}$ to ${\cal T}_{i+2} \in {\cal T}_{nc}$. By bearing in mind (\ref{Vjump}), (\ref{dotVdisc})
 and Assumption 3 we have that the jump undergone by the Lyapunov function between two connected topologies can be estimated as 
 \[
 \ba{rcl}
 V(t_{i+1}^+) &\leq& a_{\rm j } V(\zeta(t_{i+1}^-)) \leq a_{\rm j } e^{a_{\rm nc} T_0}  V(\zeta(t_{i}^+))\\[1mm]
  & \leq& a_{\rm j } e^{a_{\rm nc} T_0}  a_{\rm j}V(\zeta(t_{i}^-)) = e^{c_{\rm j}} V(\zeta(t_{i}^-))
\ea
 \]
 with $c_{\rm j} :=  a_{\rm nc} T_0 + 2 \ln ( a_{\rm j})$.
We are thus left to study an hybrid system governed by (\ref{dotVconn}) during flows and instantaneously jumping as $V^+ \leq e^{c_{\rm j}} V$, with the length of the flow intervals governed by an average dwell time of the form (\ref{ADWT}). 

 The fact that the time intervals satisfy an average dwell-time condition given by  (\ref{ADWT}) allows one to say (see \cite{CTG08}) that flow and jump times of the hybrid system can be thought of as governed by a clock variable $\varsigma$ flowing according to the differential inclusion $\dot \varsigma \in [0, 1/\tau]$ when $\varsigma \in [0,n_0]$ and jumping as $\varsigma^+ =\varsigma -1$ when $\varsigma \in [1,n_0]$. We thus endow the networked system with the clock variable and study the resulting hybrid system whose Lyapunov function flows and
jumps according to the following rules 
 \[
\ba{l}
 \left . 
 \ba{rcl}
 \dot \varsigma &\in& [0, 1/\tau]\\
 \dot V &\leq& - a_{\rm c} V
 \ea \right \} \quad (\varsigma, V)\in  [0,n_0] \times \Real\\[4mm]
 \left.
 \ba{rcl}
 \varsigma^+ &=& \varsigma -1\\
 V^+ &\leq& e^{c_{\rm j}} V
 \ea \right \} \quad (\varsigma, V)\in  [1,n_0] \times \Real\\ 
\ea
 \]
  
 Specifically, following \cite{CTG08}, let  
\[
{W}(\varsigma,\zeta)=e^{L\varsigma} {V(\zeta)}
\] 
with $L \in $ $(c_{\rm j},\tau a_{\rm c}/ 2)$. 
 During flows, by compactly writing (\ref{gamma}) as $\dot \zeta = F(\zeta,z_1)$,  
 we have that for all  $v  \in \mbox{col}\left([0,1 /\tau]\,, {F}(\zeta, z_1)  \right )$
\[
\ba{rcl}
 \langle \nabla {W}(\tau,\zeta), v \rangle &=& L \,e^{L \varsigma} \, \dot \varsigma \, {V}(\zeta) + e^{L \varsigma} \langle \nabla {V}(\zeta), F(\zeta,z_1) \rangle\\
 &\leq&
 \dst L e^{L \varsigma} {1 \over \tau} {V}(\zeta) - a_{\rm c} e^{L \varsigma} V(\zeta)\\
 &\leq&  \dst - (a_{\rm c} - {L \over \tau} ) {W}(\varsigma, \zeta)\\
 &\leq& - \dst {a_{\rm c} \over 2} {W}(\varsigma,\zeta)
 \ea 
\]
for all $(\varsigma,\zeta, z_1) \in [0,n_0] \times \mathbb{R}^{(N_{\rm a}-1)d} \times \mathbb{R}^{d}$. On the other hand, during jumps, we have that  
\[
\ba{rcl}
 W^+ &=&\dst e^{L\varsigma^+} {V}(\zeta^+) \leq  e^{L (\varsigma -1)} e^{c_{\rm j}} V(\zeta)\\[1mm]
 &=&  \dst e^{-L + c_{\rm j}}e^{L \varsigma} V(\zeta) = 
 \dst e^{-L + c_{\rm j}}W(\varsigma,\zeta)\\[1mm]
 &=& \varepsilon  W(\varsigma,\zeta)
\ea
\]
with $\varepsilon = e^{-L + c_{\rm j}} \in (0,1)$. The Lyapunov function $W(\cdot,\cdot)$ is thus decreasing both during flows and during jumps. This and the fact that $W$ is positive definite with respect to the set $[0,n_0] \times \{0\}$ lead to the conclusion that the set $[0,n_0] \times \{0\}$ is globally asymptotically stable. This,  by taking advantage from Assumption 1, proves the result. 
\end{proof}

\section{Conclusions}

 We addressed the problem of achieving consensus within a network of homogeneous nonlinear agents with switching communication (connected and disconnected) topologies. We have proved that if the switching rule of connected topologies fulfils an average dwell-time condition and the length of time intervals in which disconnected topologies are active is upper bounded by a constant, consensus is achieved.
 The proposed result considers  a network of homogeneous agents. Following the arguments in \cite{nostro}, however,  robust consensus among heterogeneous nonlinear systems exchanging information within a switching network of the kind considered in this paper can be obtained.

\end{document}